\definecolor{figred}{cmyk}{0,.8,1,0}
\definecolor{figyellow}{cmyk}{.1,.05,.9,0}
\definecolor{figorange}{cmyk}{0,.5,1,0}
\definecolor{figblue}{cmyk}{1,.5,0,0}
\definecolor{figmagenta}{cmyk}{.1,.7,0,0}
\definecolor{figcyan}{cmyk}{.8,0,0,0}
\definecolor{figgreen}{cmyk}{.97,0,.75,0}
\newcommand{\sph}{\mathcal{S}}
\newtheorem*{rep@theorem}{\rep@title}
\newcommand{\newreptheorem}[2]{%
\newtheorem*{rep@#1}{\rep@title}%
\newenvironment{rep#1}[1]{%
 \def\rep@title{#2 \ref*{##1}}%
\begin{rep@#1}}%
{\end{rep@#1}}}
\newtheorem{theor}{Theorem} 
\newtheorem{corol}[theor]{Corollary} 
\newtheorem{theorem}{Theorem}[section]
\newtheorem{lemma}[theorem]{Lemma}
\theoremstyle{definition}
\title{Coloring spheres in 3--manifolds}
\author{Edgar A. Bering IV}
\address{San Jos\'{e} State University, One Washington Square, San Jose, CA, 95112}
\email{edgar.bering@sjsu.edu}
\author{Bennett Haffner}
\email{bennett.haffner@sjsu.edu}
\author{Estephanie Ortiz}
\email{estephanie.ortiz@sjsu.edu}
\author{Olivia Sanchez}
\email{olivia.sanchez@sjsu.edu}
\begin{document}

\begin{abstract}
The sphere graph of $M_r$, a connect sum of $r$ copies of $S^1\times S^2$ was introduced by Hatcher as an analog of the curve graph of a surface to study the outer automorphism group of a free group $F_r$. Bestvina, Bromberg, and Fujiwara proved that the chromatic number of the curve graph is finite; bounds were subsequently improved by Gaster, Greene, and Vlamis. Motivated by the analogy, we provide upper and lower bounds for the chromatic number of the sphere graph of $M_r$. As a corollary to the prime decomposition of 3--manifolds, this gives bounds on the chromatic number of the sphere graph for any orientable 3--manifold.
\end{abstract}

\maketitle

\section{Introduction}

The sphere graph of a connect sum of $r$ copies of $S^1\times S^2$ was introduced by Hatcher to investigate homological stability of the automorphism groups of free groups; analogous to Harvey's use of the curve graph of a surface to investigate mapping class groups of surfaces~\cite{hatcher-sphere}. Subsequently, the large scale geometry of both graphs have been extensively studied on their respective sides of the long-running analogy between automorphisms of free groups and mapping class groups of surfaces~\cites{mm1,mm2,BBF,hm1,hm2,bf-ff,subfactor}.
In addition to their above mentioned role, both graphs are rich combinatorial objects. The graph theory of both the curve graph~\cites{ivanov,al1,al2,GGV,mrt,dkg} and the sphere graph~\cites{free splittings,bering-leininger} have seen a similar development of parallel results.

We will focus our attention on the chromatic number. Bestvina, Bromberg, and Fujiwara first proved that the chromatic number of the curve graph is finite, on the way to determining the asymptotic dimension of the mapping class group of a surface~\cite{BBF}*{Lemma 5.6}. Their initial bound was doubly exponential in the genus of the surface. Subsequently, Gaster, Greene, and Vlamis produced a lower bound super-linear in the genus and a singly exponential exponential upper bound~\cite{GGV}*{Theorem 1.5}. Motivated by the guiding analogy, we obtain upper and lower bounds for the sphere graph of a connect sum of $r$ copies of $S^1\times S^2$.

\begin{theor}\label{maintheorem}
\[ r\log r \preceq \chi(\sph(M_r)) \preceq 2^{9r2^r}. \]
\end{theor}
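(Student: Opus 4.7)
The plan is to prove the upper and lower bounds by distinct techniques: a combinatorial coloring via normal form with respect to a fixed maximal sphere system for the upper bound, and an induced subgraph argument of Kneser type for the lower bound.

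For the upper bound, I would fix a maximal sphere system $\Sigma \subset M_r$ consisting of $3r - 3$ disjoint essential spheres and cutting $M_r$ into $2r - 2$ pieces, each obtained from $S^3$ by removing three open balls. By Hatcher's normal form theorem, every essential sphere $\sigma$ admits an isotopy representative meeting each piece transversely in a disjoint union of essential disks, and each piece admits only a bounded number of isotopy types of such disks. I would define the coloring $c(\sigma)$ as a finite decoration recording, for each piece $P$, how the disks of $\sigma \cap P$ are matched across the adjacent spheres of $\Sigma$. The target bound $2^{9r 2^r}$ should emerge by counting admissible decorations: the inner factor $2^r$ reflects the number of combinatorial threading patterns of $\sigma$ through the tree dual to $\Sigma$, while the factor $9r$ captures the bounded per-piece content across the $3r - 3$ components of $\Sigma$. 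The crucial and most difficult step will be verifying that two disjoint non-isotopic spheres produce distinct decorations; this should follow from essential uniqueness of normal form together with Laudenbach's theorem that homotopic embedded spheres in $M_r$ are isotopic.

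For the lower bound, I would exhibit a structured induced subgraph of $\sph(M_r)$ with chromatic number of order $r \log r$. A natural source is the family of separating spheres $\sigma_S$ indexed by proper nonempty subsets $S$ of a fixed basis of $\pi_1(M_r) \cong F_r$, for which disjointness of $\sigma_S$ and $\sigma_T$ is determined purely by set-theoretic nesting of the bipartitions $\{S, S^c\}$ and $\{T, T^c\}$: they are disjoint precisely when one of the four intersections $S \cap T$, $S \cap T^c$, $S^c \cap T$, $S^c \cap T^c$ is empty. By restricting to subsets of a carefully chosen size $k$ and invoking Lovász's topological lower bound, or by analyzing the full subset family via a fractional chromatic inequality $\chi \geq n / \alpha$, one should obtain the required super-linear factor $\log r$.

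The main obstacle I anticipate is the uniqueness step in the upper bound: showing that the finite decoration truly separates disjoint spheres will require a careful manipulation of normal form and the fine structure of sphere intersections in $M_r$. Counting decorations itself, while crude, should be routine once the invariant is defined. For the lower bound, the principal challenge is identifying the correct Kneser-like subfamily, since naive choices may give only polynomial bounds via $n / \alpha$ and extracting the $\log r$ factor likely requires the full strength of topological Kneser arguments.
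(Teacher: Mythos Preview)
Your lower bound sketch is essentially the paper's argument. The separating spheres $\sigma_S$ you describe are precisely the image of a graph homomorphism from a total Kneser graph into $\sph(M_r)$, and the $r\log r$ bound then follows from the Gaster--Greene--Vlamis computation of $\chi(KG(n))$ (which itself rests on Lov\'asz-type topological methods). The paper phrases this via a cut system $C$ and the induced map $\sph(M_{0,2r})\to\sph(M_r)$, but the content is the same.

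Your upper bound approach has a genuine gap. A sphere in Hatcher normal form with respect to a maximal system $\Sigma$ can meet each sphere of $\Sigma$ in arbitrarily many circles, so the ``decoration'' you propose---recording how the disks in each piece match across adjacent $\Sigma$-spheres---is not a finite invariant: there is no a priori bound on the number of disks per piece, hence no bound on the number of decorations. (Incidentally, the graph dual to a maximal $\Sigma$ is a trivalent graph of rank $r$, not a tree, so the ``threading through the tree'' heuristic also needs revision.) If you attempt to salvage finiteness by recording only mod-$2$ intersection numbers with the spheres of $\Sigma$, you recover exactly the class of the sphere in $H_2(M_r;\mathbb{Z}/2\mathbb{Z})$---and this is \emph{not} a coloring, since disjoint non-isotopic spheres can be $\mathbb{Z}/2$--homologous in $M_r$. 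Laudenbach's theorem only guarantees that isotopic spheres have the same normal form; it gives no mechanism forcing disjoint non-isotopic spheres to have distinct finite invariants of this kind.

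The paper's upper bound uses a different device altogether. For each connected double cover $\tilde M$ of $M_r$ it records the unordered pair of $\mathbb{Z}/2\mathbb{Z}$ homology classes of the two lifts of the sphere; the color of $a$ is the resulting function on the set of double covers. The count $2^{9r\cdot 2^r}$ arises because there are at most $2^r$ double covers, each with $H_2$ of rank $4r-2$, giving at most $\sim 2^{9r}$ possible values per cover and hence $\sim (2^{9r})^{2^r}$ functions. The substantive step is that if $a,b$ are disjoint, non-isotopic, and $\mathbb{Z}/2$--homologous in $M_r$, one can build a specific double cover---by cutting along non-separating spheres found in each complementary region of $a\cup b$ and cross-gluing two copies---in which the lifts of $a$ and $b$ are no longer homologous. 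Nothing in your normal-form data plays this role.
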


The proof of \cref{maintheorem} is given in two lemmas: \cref{lower-bound} for the lower bound and \cref{upper-bound} for the upper bound.

Prime decomposition of 3--manifolds~\cite{hempel}*{Theorem 3.15} allows us to apply \cref{maintheorem} to an orientable 3--manifold $M$. Let $M = N_1\sharp N_2\sharp\cdots\sharp N_k\sharp M_r$ be the prime decomposition of $M$ into a connect sum of irreducible 3--manifolds $N_i$ and $r$ copies of $S^1\times S^2$ collected in the $M_r$ term. An essential embedded sphere in $M$ is isotopic to either a connect-sum sphere or an essential sphere in $M_r$. As there are finitely many connect-sum spheres we obtain the following corollary.

\begin{corol}
If $M$ is an orientable 3--manifold with $r$ copies of $S^1\times S^2$ in its prime decomposition then $r\log(r)\preceq\chi(\sph(M)) \preceq 2^{9r2^r}$.
\end{corol}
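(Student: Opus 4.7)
The plan is to deduce the corollary directly from \cref{maintheorem} by using the classification of essential spheres in $M$ recorded in the excerpt to reduce the problem to coloring $\sph(M_r)$ together with a finite set of extra vertices.

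First I would verify that $\sph(M_r)$ sits inside $\sph(M)$ as an induced subgraph: every essential sphere of $M_r$ remains essential in $M$, and two such spheres are disjoint, respectively isotopic, in $M$ if and only if they are so in $M_r$. These statements follow from standard normal-form arguments for sphere systems in reducible 3--manifolds together with the irreducibility of the summands $N_i$; the isotopy direction also uses Laudenbach's theorem that homotopic embedded spheres in a 3--manifold are isotopic. Combined with the observation from the excerpt, the vertex set of $\sph(M)$ decomposes as this copy of $\sph(M_r)$ together with finitely many connect-sum sphere vertices; denote the number of the latter by $C$.

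For the lower bound, any proper coloring of $\sph(M)$ restricts to a proper coloring of the induced subgraph $\sph(M_r)$, so $\chi(\sph(M)) \ge \chi(\sph(M_r)) \succeq r\log r$ by \cref{maintheorem}. For the upper bound, I would color $\sph(M_r)$ with at most $2^{9r2^r}$ colors using \cref{maintheorem} and then assign each of the $C$ connect-sum spheres its own distinct new color. Any edge of $\sph(M)$ either lies entirely inside the $\sph(M_r)$-subgraph (properly colored by construction) or has at least one connect-sum endpoint, whose unique color differs from the color of the other endpoint. Hence $\chi(\sph(M)) \le 2^{9r2^r} + C \preceq 2^{9r2^r}$, since $C$ is absorbed into the additive constant of $\preceq$.

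The only nontrivial step is the induced-subgraph claim; in particular, establishing that a sphere essential in $M_r$ cannot become inessential in $M$ by bounding a ball that extends across connect-sum spheres into the irreducible summands. This is the single piece of 3--manifold topology needed beyond the main theorem; everything else is a combinatorial packaging of \cref{maintheorem}.
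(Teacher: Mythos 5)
Your proposal matches the paper's own argument for this corollary: the paper likewise deduces it from \cref{maintheorem} by using prime decomposition to observe that every essential sphere of $M$ is isotopic either to a connect-sum sphere or to an essential sphere of $M_r$, and then absorbs the finitely many connect-sum spheres into the constant implicit in $\preceq$. The only difference is that you spell out the induced-subgraph verification (minimal position for sphere systems and Laudenbach's theorem) that the paper leaves implicit.
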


The proof of the upper bound in \cref{maintheorem} involves a coloring constructed using double covers and $\mathbb{Z}_2$ homology. Bestvina and Feighn define a coloring on the \emph{free factor} graph of a free group~\cite{subfactor}*{Definition 4.11, Example 4.12}, which can be characterized in terms of $\mathbb{Z}_2$ homology. While not explicitly calculated, the coloring provided by Bestvina and Feighn is also doubly-exponential in the rank. Coloring the free factor complex is insufficient to easily recover \cref{maintheorem}: there are well-studied \emph{coarse projections} from the sphere graph to the free splitting graph, but coarse projections do not interact well with graph colorings.

In addition to the motivation from mapping class groups, Gaster, Greene, and Vlamis describe a connection between the chromatic number of the curve graph and interesting open problems in the theory of combinatorial designs. We are unaware of any analogous investigation for spheres in connect sums, but the parallel questions are natural and intriguing.

\subsection*{Notational conventions} A \emph{vertex coloring} of a graph $G$ is a function $\phi \colon V(G) \to X$ such that for every edge $(u,v)\in E(G)$, $f(u)\neq f(v)$. The cardinality $|X|$ is the size of the coloring. The \emph{chromatic number} of $G$, denoted $\chi(G)$, is the minimal size of a coloring of $G$. 

Given two functions $f,g \colon \mathbb{N}\to \mathbb{R}_{\ge 0}$, define $f\preceq g$ if there exists an absolute constant $C$ such that $f \le C\cdot g$ (often written $f\in O(g)$ elsewhere in the literature), and $f\sim g$ if $f\preceq g$ and $g\preceq f$ (often $f\in \Theta(g)$).

For $r \ge 0$ we define $M_r = M_{r, 0} = \sharp_r S^1\times S^2$ to be the connect sum of $r$ copies of $S^1\times S^2$ with the convention $M_0 = S^3$. For $n\ge 0$ we define $M_{r,n} = M_r \setminus \sqcup_k B$ to be $M_r$ with $n$ disjoint open balls removed. A sphere embedded in a 3--manifold is \emph{essential} if it does not bound a ball and \emph{non-peripheral} if it is not isotopic to the boundary. The \emph{sphere graph} of a 3--manifold M is the graph $\sph(M)$ with vertex set the isotopy classes of embedded essential non-peripheral spheres in $M$ where two classes are joint by an edge if they have disjoint representatives. For brevity, we will refer to vertices of $\sph(M)$ as \emph{spheres in $M$} when no ambiguity occurs. A \emph{cut-system} in $M_{r,n}$ is a disjoint union of embedded spheres $C$ such that $\overline{M_{r,n}\setminus C} = M_{0,n+2r}$.

\section{Kneser graphs and the lower bound}

We establish the lower bound using Gaster, Greene, and Vlamis' computation of the chromatic number for the \emph{total Kneser graph}~\cite{GGV}. Given a pair of positive integers $n, k$ with $n\ge 2k$ the \emph{Kneser graph} $KG(n,k)$ is the graph with vertices the $k$ element subsets of $\{1,\ldots, n\}$ and edges joining disjoint subsets. The \emph{total Kneser graph} is the graph whose vertices are unordered partitions of $\{1,\ldots,n\}$ into two non-empty subsets, and two partitions are joined by an edge if they are \emph{nested}, that is $(A, B)$ is joined to $(C, D)$ if one of $A$ or $B$ is a subset of $C$ or $D$. Note this condition is symmetric.  Gaster, Greene, and Vlamis compute the asymptotics of the chromatic number, which we record.

\begin{theorem}[\cite{GGV}*{Theorem 1.1}]\label{color-kneser}
\[ \chi(KG(n)) \sim n\log n. \]
\end{theorem}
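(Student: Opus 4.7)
The plan is to establish matching asymptotic bounds $\chi(KG(n)) \preceq n\log n$ and $\chi(KG(n)) \succeq n\log n$ separately.

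For the upper bound, I would construct an explicit coloring. First, stratify the partitions of $\{1,\ldots,n\}$ by the size $k$ of the smaller part, $1 \leq k \leq \lfloor n/2 \rfloor$. Within a single level $k$, two distinct partitions $\{A,A^c\}$ and $\{B,B^c\}$ are adjacent in $KG(n)$ exactly when the small parts $A, B$ are disjoint, recovering the ordinary Kneser graph $KG(n,k)$, which admits a proper coloring in $n-2k+2$ colors by the Lov\'asz theorem. Adjacency between distinct levels $k < \ell$ reduces to $A \subseteq B$ or $A \cap B = \emptyset$ on the smaller parts. A naive product of a Lov\'asz coloring with the level index yields a coloring with $O(n^2)$ colors. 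To shave this down to $O(n\log n)$, group the levels dyadically into $O(\log n)$ bands $[2^i, 2^{i+1})$ and, within each band, design a shared $O(n)$-coloring that simultaneously respects intra-band Kneser adjacency and cross-level containment adjacency; take the band colors pairwise disjoint.

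For the lower bound, I would use a counting/extremal argument. Any color class is an independent set, equivalently a family of pairwise non-nested (pairwise crossing) partitions. Bounding the maximum size of such a crossing family via Sperner- or LYM-style extremal methods --- the natural target being $O(2^n / (n\log n))$ --- and dividing the total vertex count $2^{n-1}-1$ by this bound yields $\chi(KG(n)) \succeq n\log n$. An alternative route is a topological Lov\'asz-type obstruction on a simplicial complex associated to $KG(n)$, but the counting route is more elementary and parallels classical Kneser lower bounds.

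The principal obstacle is the construction of the dyadic shared coloring in the upper bound: maintaining $O(n)$ colors per band while simultaneously handling both adjacency types is not a direct consequence of Lov\'asz's theorem and requires a nontrivial combinatorial design. A secondary, milder obstacle is sharpening the extremal bound on crossing families in the lower-bound counting argument to match the claimed asymptotic, since generic antichain bounds will be off by polynomial factors.
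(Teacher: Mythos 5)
This statement is not proved in the paper at all: it is imported as a black box from Gaster--Greene--Vlamis (their Theorem 1.1), so the only ``proof'' here is the citation, and there is no internal argument to compare yours against. Judged on its own terms, your sketch has a genuine gap in each direction, and the one in the lower bound is fatal as stated. The counting scheme you propose --- bound the maximum size of a pairwise-crossing family and divide the vertex count $2^{n-1}-1$ by it --- cannot produce $n\log n$, because the independence number of $KG(n)$ is genuinely large. For $n$ even, any two \emph{distinct} balanced bipartitions are automatically non-nested (containment between two sets of size $n/2$ forces equality, and equality of a part with a part of the other bipartition means the bipartitions coincide), so the $\tfrac{1}{2}\binom{n}{n/2}=\Theta(2^n/\sqrt{n})$ balanced bipartitions form a single independent set. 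Hence $|V|/\alpha = O(\sqrt{n})$, and your ``natural target'' $O(2^n/(n\log n))$ for crossing families is simply false. This is not a matter of sharpening generic antichain bounds by polynomial factors; the unweighted count is structurally incapable of seeing $n\log n$. What is needed (and what underlies the GGV lower bound) is a \emph{weighted} LYM/Erd\H{o}s--Ko--Rado-style count: give a bipartition whose smaller part has size $k$ the weight $1/(k\binom{n}{k})$, so the total weight of all vertices is $\sum_k 1/k \sim \log n$, and prove that every crossing family has total weight $O(1/n)$. On a single level $k$ this is exactly Erd\H{o}s--Ko--Rado, since a crossing family of $k$-sets is intersecting and $\binom{n-1}{k-1}/(k\binom{n}{k}) = 1/n$; the real content, entirely absent from your sketch, is that the $O(1/n)$ bound persists for families spread across many levels.

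On the upper bound, the stratification by the size of the smaller part, the identification of each level with the ordinary Kneser graph $KG(n,k)$, and the dyadic grouping into $O(\log n)$ bands of $O(n)$ colors each is the right shape of the answer. But, as you acknowledge yourself, the entire difficulty is the shared $O(n)$-coloring within a band that simultaneously handles disjointness and containment; the Lov\'asz/Kneser colorings do not supply it, and without an explicit construction (GGV give one, essentially by a divide-and-conquer scheme) the upper bound is not proved. In short: both halves of your plan stop exactly where the work begins, and the lower-bound half as literally described would have to be replaced, not completed. For the purposes of this paper, of course, no proof is required --- \cref{color-kneser} is quoted, not derived.
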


The partition definition of the Kneser graph is intimately related to the structure of $\sph(M_{0,n})$. 

\begin{lemma}\label{sphere-kneser}
\[\sph(M_{0,n}) = KG(n) \setminus KG(n,1).\]
\end{lemma}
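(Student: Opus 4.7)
The plan is to set up an explicit bijection between the vertex sets that preserves adjacency. The key geometric input is Alexander's theorem: every smoothly embedded $2$--sphere in $S^3$ bounds a $3$--ball on each side. It follows that any embedded sphere $S$ in $M_{0,n}$ separates the manifold into two components, and each component, capped off by gluing a $3$--ball along $S$, is again a punctured $S^3$. Hence $S$ determines an unordered partition $(A,B)$ of the $n$ boundary spheres $\{1,\ldots,n\}$ according to which side each lies on.

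First I would show that this map from isotopy classes to partitions is a bijection onto the set of unordered partitions. For well-definedness and injectivity, if two spheres $S_1,S_2$ induce the same partition $(A,B)$, then each side of $S_i$ is a punctured ball, so a standard innermost-disk argument (isotoping one sphere across the product regions cobounded with the other) produces an ambient isotopy from $S_1$ to $S_2$. Surjectivity is immediate by constructing a standard separating sphere in $S^3$ that separates the chosen balls. Under this correspondence, $S$ bounds a ball in $M_{0,n}$ exactly when one of $A,B$ is empty, and $S$ is isotopic to a boundary component exactly when one of $A,B$ is a singleton. So the vertices of $\sph(M_{0,n})$ correspond precisely to partitions $(A,B)$ with $|A|,|B|\ge 2$, which is the vertex set of $KG(n)\setminus KG(n,1)$.

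Next I would check the edge correspondence. Suppose $S_1$ and $S_2$ are disjoint representatives inducing partitions $(A_1,B_1)$ and $(A_2,B_2)$. Since $S_2$ is connected and disjoint from $S_1$, it lies entirely in one of the two components of $M_{0,n}\setminus S_1$; say the $A_1$--side. Then every boundary sphere on one side of $S_2$ must lie in $A_1$, giving $A_2\subseteq A_1$ or $B_2\subseteq A_1$, which is exactly the nesting condition. Conversely, given nested partitions, I would build disjoint representatives: choose a standard sphere $S_1$ separating according to $(A_1,B_1)$; inside the punctured ball on the appropriate side, choose a second standard sphere realizing the partition induced by $(A_2,B_2)$. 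Both are embedded, disjoint, and realize the prescribed isotopy classes.

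The main obstacle is the uniqueness statement in the vertex bijection, i.e.\ that the partition determines the sphere up to isotopy. This is the one place where genuine 3--manifold input (Alexander's theorem together with an innermost-disk or standard position argument for spheres in a punctured $S^3$) is needed; the edge correspondence is then essentially combinatorial. Everything else is bookkeeping about partition sizes and the definitions of essential and non-peripheral.
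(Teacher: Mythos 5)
Your proposal is correct and follows essentially the same route as the paper: spheres in $M_{0,n}$ correspond bijectively to unordered partitions of the boundary components into parts of size at least two (the paper cites a lemma of Bering--Leininger for the uniqueness step you sketch via innermost disks), and adjacency corresponds to nesting of partitions. The only minor variation is in the converse half of the edge correspondence, where you construct disjoint standard representatives from nested partitions while the paper instead shows that spheres intersecting essentially in minimal position induce non-nested partitions; either direction completes the equivalence.
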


\begin{proof}
A sphere $s\in \sph(M_{0,n})$ is necessarily separating, and thus partitions the connected components of $\partial M_{0,n}$. In fact, this partition determines $s$~\cite{bering-leininger}*{Lemma 9}. Moreover, since each sphere of $\sph(M_{0,n})$ is non-peripheral, both pieces of the partition have more than one component. Fix a bijection between components of $\partial M_{0,n}$ and $\{1,\ldots, n\}$. 
The map that sends a sphere to the corresponding partition of $\{1,\ldots, n\}$ is a bijection between the vertices of $\sph(M_{0,n})$ and $KG(n)\setminus KG(n,1)$.

It remains to verify that this assignment preserves the edge relation. Suppose $a, b\in \sph(M_{0,n})$ are disjoint. Then $a$ is contained in one of the two components of $M_{0,n}\setminus b$, so the partition induced by $a$ nests with the partition induced by $b$. Suppose $a,b \in \sph(M_{0,n})$ intersect essentially, and pick representatives of their isotopy classes that intersect minimally. Let $X$ be a component of $M_{0,n}\setminus b$. Since $a$ intersects $b$ minimally, $a\cap X$ is a collection of essential disks in $X$. Therefore each component of $X\setminus a$ contains some component of $\partial M_{0,n}$ and the induced partitions are not nested.
\end{proof}

As a specific example, \cref{s5-petersen} illustrates $\sph(M_{0,5})$, and the picture can be used to verify
\[ \sph(M_{0,5}) = KG(5)\setminus KG(5,1) = KG(5,2);\]
note that $KG(5,2)$ is the well-known Petersen graph.

\begin{figure}
\centering
    \begin{tikzpicture}[every node/.style={font=\large},thick,scale=3]

\node[circle,draw=black,fill=black!10,thick, minimum size=1.6cm] (v1) at (0:1) {};
\node[circle,draw=black,fill=black!10,thick, minimum size=1.6cm] (v2) at (72:1) {};
\node[circle,draw=black,fill=black!10,thick, minimum size=1.6cm] (v3) at (144:1) {};
\node[circle,draw=black,fill=black!10,thick, minimum size=1.6cm] (v4) at (216:1) {};
\node[circle,draw=black,fill=black!10,thick, minimum size=1.6cm] (v5) at (288:1) {};

\draw[figblue] ($ (v1)+(216:0.39) $) arc (216:396:0.39) to[out=126,in=306]  node[midway,below left] {}
                    ($ (v2)+(36:0.39) $) arc  (36:216:0.39) to[out=306,in=126]
                   cycle;
\draw[figblue] ($ (v1)+(72:0.3) $) arc (72:-108:0.3) to[out=162,in=-18] ($ (v3) + (252:0.3) $) arc (252:72:0.3) to[out=-18,in=162] cycle;
\draw[figblue] ($(v1) + (288:0.36)$) arc (288:468:0.36) to[out=198,in=18]
              ($(v4) + (108:0.36)$) arc (108:288:0.36) node[midway,below left] {} to[out=18,in=198]
              cycle;
\draw[figblue] ($(v1) + (324:0.42)$) arc (-36:144:0.42) to [out=234,in=54]
            ($(v5) + (144:0.42)$) arc (144:324:0.42) to [out=54, in=234] node[midway, below right] {}
            cycle;
\draw[figyellow] ($(v2) + (288:0.42)$) arc (288:468:0.42) to[out=198,in=18] node[midway,below right] {}
            ($(v3) + (108:0.42)$) arc (108:288:0.42) to[out=18,in=198] cycle;
            
\draw[figyellow] ($(v3) + (0:0.39)$) arc (0:180:0.39) to[out=270,in=90]
              node[midway,left] {}
              ($(v4) + (180:0.39)$) arc (180:360:0.39) to[out=90,in=270]
              cycle;
\draw[figyellow] ($(v2) + (324:0.33)$) arc (-36:144:0.33) to [out=234,in=54]
            ($(v4) + (144:0.33)$) arc (144:324:0.33) to [out=54, in=234] 
cycle;
\draw[figred] ($(v3) + (36:0.33)$) arc (36:216:0.33) to [out=306,in=126]
              ($(v5) + (216:0.33)$) arc (216:396:0.33) node[midway, below] {} to [out=126,in=306]
              cycle;
\draw[figred] ($(v2) + (0:0.36)$) arc (0:180:0.36) to [out=270,in=90]
              ($(v5) + (180:0.36)$) arc (180:360:0.36) node[midway, below] {} to [out=90,in=-90]
              cycle;
\draw[figred] ($ (v5)+(72:0.3) $) arc (72:-108:0.3) to[out=162,in=-18] ($ (v4) + (252:0.3) $) arc (252:72:0.3) to[out=-18,in=162] cycle;
\end{tikzpicture}
\caption{The stereographic projection of the midsphere of $M_{0,5}$ with the deleted balls shown as removed disks. Each sphere of $\sph(M_{0,5})$ is shown as a simple closed curve of intersection with the midsphere. A pleasant exercise is to verify the intersection pattern of these spheres is the Petersen graph, 3-colored by the indicated colors.}\label{s5-petersen}
\end{figure}
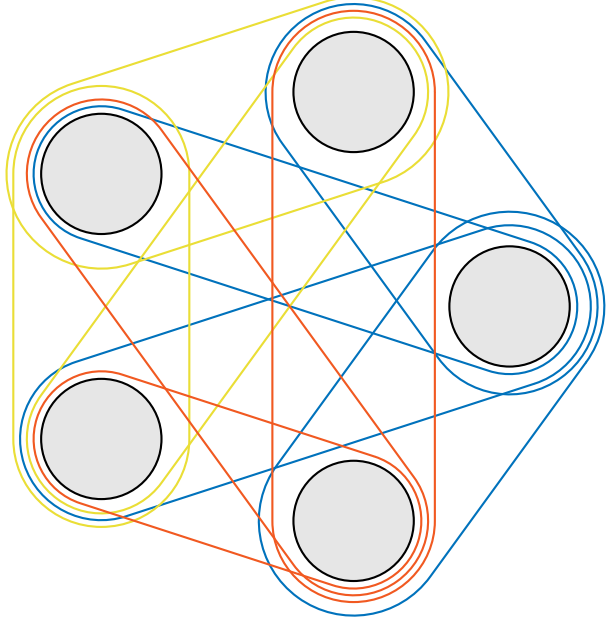

\begin{lemma}\label{lower-bound}
\[ \chi(\sph(M_{r,n})) \succeq (n+2r)\log(n+2r).\]
\end{lemma}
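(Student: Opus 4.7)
The plan is to realize $\sph(M_{0, n+2r})$ as a subgraph of $\sph(M_{r,n})$ and invoke the known asymptotics for the punctured case. First, combining \cref{color-kneser} with \cref{sphere-kneser} yields $\chi(\sph(M_{0, n+2r})) \succeq (n+2r)\log(n+2r)$: the vertices of $KG(n+2r)$ omitted in $\sph(M_{0, n+2r}) = KG(n+2r)\setminus KG(n+2r,1)$ form a clique $K_{n+2r}$, since two singleton partitions are always nested. So any proper coloring of $\sph(M_{0, n+2r})$ extends to $KG(n+2r)$ with at most $n+2r$ additional colors, giving $\chi(\sph(M_{0, n+2r})) \geq \chi(KG(n+2r)) - (n+2r) \succeq (n+2r)\log(n+2r)$.

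Next, I would fix a cut system $C \subset M_{r,n}$ of $r$ disjoint spheres, so that $\overline{M_{r,n}\setminus N(C)} \cong M_{0, n+2r}$, and consider the map $\Phi \colon \sph(M_{0, n+2r}) \to \sph(M_{r,n})$ induced by the inclusion. Since $\Phi$ sends disjoint pairs to disjoint pairs it preserves the edge relation, so to realize $\sph(M_{0, n+2r})$ as a subgraph of $\sph(M_{r,n})$ it remains to verify that $\Phi$ is well-defined on isotopy classes and injective.

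Well-definedness reduces to a case analysis on the partition $(X, Y)$ of the $n+2r$ boundary spheres of $M_{0, n+2r}$ induced by $s$ via \cref{sphere-kneser}, using $|X|, |Y| \ge 2$: if some cut sphere has its two $M_{0, n+2r}$-boundaries lying in opposite parts of the partition, then $\Phi(s)$ is non-separating in $M_{r,n}$ and hence essential and non-peripheral; otherwise each side of $\Phi(s)$ in $M_{r,n}$ has the form $M_{k, m+1}$ for some $k, m \ge 0$ with $m + 2k \ge 2$, which is too large to be either a ball or a collar of a boundary component.

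The main obstacle is injectivity. If two non-isotopic $s_1, s_2 \in \sph(M_{0, n+2r})$ became isotopic in $M_{r,n}$, then after choosing disjoint representatives, standard 3--manifold topology provides a cobounding region $R \cong S^2 \times I \subset M_{r,n}$. Any component of $C \cap R$ is an essential sphere in $R$, hence parallel to $\partial R = s_1 \cup s_2$; taking the component of $C \cap R$ closest to $s_1$ would express $s_1$ as parallel in $M_{0, n+2r}$ to a boundary sphere coming from that cut sphere, violating non-peripherality of $s_1$. Hence $R \cap C = \emptyset$ and $s_1, s_2$ are already isotopic in $M_{0, n+2r}$, a contradiction. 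Monotonicity of $\chi$ under subgraph inclusion then gives $\chi(\sph(M_{r,n})) \ge \chi(\sph(M_{0, n+2r})) \succeq (n+2r)\log(n+2r)$.
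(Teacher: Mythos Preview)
Your argument follows the same route as the paper's: cut along a cut system $C$ to obtain $M_{0,n+2r}$, push the regluing/inclusion to a map of sphere graphs, and invoke the Kneser asymptotics. You are in fact more thorough than the paper in two places (explicitly handling the missing $KG(n+2r,1)$ clique and verifying non-peripherality), while also doing a bit more than needed: a graph \emph{homomorphism} $G\to H$ already yields $\chi(G)\le\chi(H)$, so it suffices that $\Phi$ not collapse \emph{adjacent} (i.e.\ already disjoint) spheres, and for those your $S^2\times I$ argument applies verbatim; for non-adjacent $s_1,s_2$ the step ``choosing disjoint representatives'' while keeping them inside $M_{0,n+2r}$ would require further justification, but that case is not required for the bound.
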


\begin{proof}
Fix a cut system $C$ in $M_{r,n}$. Consider the cut manifold $\overline{M_{r,n}\setminus C} = M_{0,n+2r}$. Gluing along $C$ induces a graph map $\sph(M_{0,n+2r})\to \sph(M_{r,n})$. Indeed, if an embedded sphere $a\subset M_{0,n+2r}$ bounds a ball in the glued manifold, then up to isotopy $C$ can be made disjoint from this ball, which implies $a$ is not essential. Moreover, if $a,b \subset M_{0,n+2r}$ are disjoint spheres, their images in the glued manifold are as well.

Thus $\chi(\sph(M_{r,n}))\ge \chi(\sph(M_{0,n+2r}))$, and the conclusion follows from \cref{sphere-kneser} and \cref{color-kneser}.
\end{proof}

\section{Double covers and the upper bound}

We exhibit an explicit coloring, using a double-cover construction analogous to that used by Bestvina, Bromberg, and Fujiwara in the curve complex setting~\cite{BBF}*{Lemma 5.6}.

Define $T(M)$ as the set of all of connected double covers of $M$. For a cover $\tilde{M}\in T(M)$ let $X_{\tilde{M}}(M)$ be the set of all subsets of $H_2(\tilde{M},\mathbb{Z}/2\mathbb{Z})$ of size at most 2, and define $X(M)$ to be the disjoint union of $X_{\tilde{M}}(M)$ over all $\tilde{M}\in T(M)$.
    Finally, let $F(M)$ be the set of all functions $f:T(M)\to X(M)$. The set $F(M)$ will serve as our coloring set.

\begin{lemma}\label{finite-colors}
For the manifold $M_{r}$,
\[ |F(M_r)| \preceq 2^{9r2^r}. \]
\end{lemma}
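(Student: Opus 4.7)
The plan is to bound $|F(M_r)| \le |X(M_r)|^{|T(M_r)|}$ by bounding each ingredient in turn, with the only nontrivial work sitting in a short homological calculation.

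First, I would count the connected double covers of $M_r$. Since $\pi_1(M_r)$ is the free group of rank $r$, such covers are classified by surjective homomorphisms $\pi_1(M_r) \to \mathbb{Z}/2\mathbb{Z}$, equivalently by nonzero classes in $H^1(M_r;\mathbb{Z}/2\mathbb{Z}) \cong (\mathbb{Z}/2\mathbb{Z})^r$. Therefore $|T(M_r)| = 2^r - 1 \le 2^r$.

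Next, for each $\tilde{M} \in T(M_r)$ I would compute $|H_2(\tilde{M};\mathbb{Z}/2\mathbb{Z})|$. The cover $\tilde{M}$ is itself a closed orientable $3$-manifold (connect sums of orientables are orientable, and finite covers of closed orientable manifolds are closed and orientable), and by the Nielsen--Schreier formula $\pi_1(\tilde{M})$ is free of rank $2(r-1)+1 = 2r-1$. Hence $H_1(\tilde{M};\mathbb{Z}) \cong \mathbb{Z}^{2r-1}$, and $\mathbb{Z}/2\mathbb{Z}$-Poincar\'e duality gives $H_2(\tilde{M};\mathbb{Z}/2\mathbb{Z}) \cong H^1(\tilde{M};\mathbb{Z}/2\mathbb{Z}) \cong (\mathbb{Z}/2\mathbb{Z})^{2r-1}$, so $|H_2(\tilde{M};\mathbb{Z}/2\mathbb{Z})| = 2^{2r-1}$.

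From here the arithmetic is routine. The number of subsets of a set of size $2^{2r-1}$ of cardinality at most $2$ is crudely bounded by $(2^{2r-1})^2 = 2^{4r-2}$, so $|X(M_r)| \le (2^r - 1) \cdot 2^{4r-2} \le 2^{5r}$, and therefore $|F(M_r)| \le |X(M_r)|^{|T(M_r)|} \le (2^{5r})^{2^r} = 2^{5r \cdot 2^r}$, comfortably within $2^{9r \cdot 2^r}$. I expect the main obstacle to be the homology computation: the argument hinges on identifying $\pi_1(\tilde{M})$ via Nielsen--Schreier and justifying Poincar\'e duality on $\tilde{M}$. Everything else is elementary counting, and the slack between the constants $5$ and $9$ leaves ample room for looser estimates.
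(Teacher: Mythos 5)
Your proof is correct and follows essentially the same route as the paper: bound $|T(M_r)|$ by $2^r$ via homomorphisms to $\mathbb{Z}/2\mathbb{Z}$, bound $|X_{\tilde M}(M_r)|$ by the square of $|H_2(\tilde M;\mathbb{Z}/2\mathbb{Z})|$, and conclude $|F(M_r)|\le |X(M_r)|^{|T(M_r)|}$. The only difference is that your Nielsen--Schreier plus Poincar\'e duality computation gives the homology rank $2r-1$, whereas the paper uses $4r-2$; yours is the sharper (indeed the correct) value, and it yields the slightly better exponent $5r2^r$, which of course still implies the stated bound.
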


\begin{proof}
First, since $\pi_1(M_r)$ is generated by $r$ elements and the index-2 subgroups are parameterized by homomorphisms to $\mathbb{Z}/2\mathbb{Z}$, we have $|T(M_r)| \le 2^r$. 

Next, for each cover $\tilde{M}\in T(M_r)$, the homology has rank $4r-2$, thus 
\[ |X_{\tilde{M}}(M_r)| = \binom{2^{4r-2}}{1} + \binom{2^{4r-2}}{2} \preceq 2^{8r} \]
and taking a disjoint union over the $2^r$ covers we find $|X(M_r)| \preceq 2^{9r}$, from which we conclude $|F(M_r)| \preceq 2^{9r2^r}$.
\end{proof}

Given a sphere $a\in\sph(M_r)$ and a double cover $\tilde{M}$ let $\tilde{a},\tilde{a}'$ denote the two lifts of $a$ to $\tilde{M}$. For this sphere, define a function $f_a\in F(M_r)$ by
\[ f_a(\tilde{M}) = \{ [\tilde{a}],[\tilde{a}']\}, \]
that is, $f_a$ assigns to a double cover $\tilde{M}$ the $\mathbb{Z}/2\mathbb{Z}$ homology classes of the lifts of $a$ to $\tilde{M}$.

\begin{lemma}\label{upper-bound}
\[ \chi(\sph(M_r)) \preceq 2^{9r2^r}.\]
\end{lemma}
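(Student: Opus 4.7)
The plan is to verify that the assignment $\phi \colon a \mapsto f_a$ defines a proper vertex coloring of $\sph(M_r)$, from which the bound follows immediately: combined with \cref{finite-colors}, this gives $\chi(\sph(M_r)) \le |F(M_r)| \preceq 2^{9r2^r}$. The content of the argument is therefore to show that whenever $a$ and $b$ are distinct, disjoint, essential, non-peripheral spheres in $M_r$, there exists some connected double cover $\tilde M \to M_r$ with $\{[\tilde a],[\tilde a']\} \neq \{[\tilde b],[\tilde b']\}$ in $H_2(\tilde M, \mathbb{Z}/2\mathbb{Z})$.

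The main geometric input is the following. Since $\pi_1(S^2) = 1$, every sphere lifts trivially in any connected double cover, producing two disjoint sphere components exchanged by the deck involution; in particular the preimage of $a$ is always a disjoint union $\tilde a \sqcup \tilde a'$ of spheres. Moreover, if $\tilde M \to M_r$ is the cover dual to a non-separating sphere $c$, i.e.\ the cover corresponding to the homomorphism $\pi_1(M_r) = F_r \to \mathbb{Z}/2\mathbb{Z}$ that records mod-$2$ intersection number with $c$, then the two lifts of $c$ jointly separate $\tilde M$ into two copies of $\overline{M_r \setminus c}$, forcing $[\tilde c] + [\tilde c'] = 0$ in $H_2(\tilde M, \mathbb{Z}/2\mathbb{Z})$ and hence $[\tilde c] = [\tilde c']$.

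With this in hand, I would extend $\{a,b\}$ to a cut system $C$ of $M_r$, which yields cohomology classes Poincar\'e dual to each non-separating element of $C$ that together span $H^1(M_r, \mathbb{Z}/2\mathbb{Z})$. The plan is then to consider the double cover dual to $a$ itself (or, when $a$ is separating, dual to a non-separating sphere chosen to lie on one side of $a$), and argue that in this cover the two lifts of $a$ are homologous while the lifts of $b$ realize a different unordered pair of $H_2$ classes. Geometrically, $b$ sits in the cut-complement of $a$ in a specific position, and this position controls how $\tilde b, \tilde b'$ relate homologically in the cover.

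I expect the main obstacle to be the case analysis over the possible separating/non-separating types of $a$ and $b$ and their $\mathbb{Z}/2\mathbb{Z}$ homology classes. The cleanest case is that $a$ is non-separating with $[a] \neq [b]$ in $H_2(M_r, \mathbb{Z}/2\mathbb{Z})$, where the cover dual to $a$ suffices. The most delicate case is when both $a$ and $b$ are separating, so both have trivial $H_2$ class in $M_r$: here I would build a distinguishing cover using a non-separating sphere living in the cut-complement of exactly one of $a, b$, exploiting the fact that the partitions of the prime summands of $M_r$ induced by $a$ and $b$ must differ because $a \neq b$.
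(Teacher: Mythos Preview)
Your overall framework is right, but the specific cover you propose does not do the job in the key case, and this is exactly where the paper's argument diverges from yours. Suppose $a$ and $b$ are both non-separating with $[a]=[b]\neq 0$ in $H_2(M_r,\mathbb{Z}/2\mathbb{Z})$. Then $a\cup b$ separates $M_r$ into two pieces $W_1,W_2$, each with boundary $a\sqcup b$. In the double cover $\tilde M$ dual to $a$, built from two copies $P,P'$ of $M_r\setminus a = W_1\cup_b W_2$ glued crosswise, the lift $\tilde b\subset P$ together with $\tilde a$ bounds the copy of $W_1$ sitting inside $P$; hence $[\tilde a]=[\tilde b]$ in $H_2(\tilde M,\mathbb{Z}/2\mathbb{Z})$. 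Combined with your own observation that $[\tilde a]=[\tilde a']$ in this cover, one gets $\{[\tilde a],[\tilde a']\}=\{[\tilde b],[\tilde b']\}$, so $f_a(\tilde M)=f_b(\tilde M)$. The cover dual to $a$ therefore cannot distinguish $a$ from $b$ here.

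The idea you are missing is that the distinguishing cover must be built from spheres lying \emph{inside} the complementary pieces $W_i$ of $M_r\setminus(a\cup b)$, not from $a$ or $b$ themselves. The paper first checks (via a short case analysis on the number of boundary components of each $W_i$, using $r\ge 3$ and $[a]=[b]$) that every $W_i$ has positive ``genus'' and hence contains a non-separating sphere $\sigma_i$. The cover is then the one dual to $\sum_i \sigma_i$. The payoff is a loop $\gamma_0\subset M_r$ crossing $a$ exactly twice, missing $b$, and crossing $\sigma_1,\sigma_2$ once each; its lift $\gamma$ to $\tilde M$ closes up, hits each of $\tilde a,\tilde a'$ once, and avoids $\tilde b,\tilde b'$, so Poincar\'e duality gives $[\tilde a]\notin\{[\tilde b],[\tilde b']\}$. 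Your ``both separating'' plan has a related weakness: there is no canonical labelling of prime summands of $M_r$ for $a$ and $b$ to partition differently, so the asserted distinguishing sphere in the cut-complement of ``exactly one'' of $a,b$ is not produced by that reasoning. The paper's complementary-piece analysis handles the separating and non-separating cases uniformly. Finally, note that the paper treats $r=2$ separately (the Farey-with-fins graph), since the complementary-piece argument can fail there.
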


\begin{proof}
First, if $r=2$ then $\sph(M_2)$ is the Farey graph with fins~\cite{culler-vogtmann}*{Section 6}. The Farey graph is planar, so has chromatic number 4~\cites{appel-haken,debruijn-erdos}, each ``fin" joins two vertices of the Farey graph by an edge-path of length two, thus $\chi(M_2) = 4$ as well. So we suppose $r\ge 3$.

Consider the function $\phi : \sph(M_r)\to F(M_r)$ defined $\phi(a) = f_a$. We will prove $\phi$ is a coloring; the bound then follows from \cref{finite-colors}.

Let $a,b$ be adjacent spheres in $\mathcal{S}(M_r)$. 
We will construct a cover $\tilde{M}\in T(M_r)$ such that $f_a(\tilde{M})\neq f_b(\tilde{M})$. First, observe that if $a$ and $b$ are not $\mathbb{Z}/2\mathbb{Z}$ homologous in $M_r$ then for any cover $\tilde{M}$ their lifts are also not homologous, so $f_a\neq f_b$. It remains to consider the case when $[a] = [b]$.

Since $a,b$ are disjoint, there are at most three components of $M_r\setminus(a\cup b)$. 
Label the closures of the components $W_1,W_2,W_3$ and notice that each one is of the form $M_{k_i,p_i}$ with $1\leq p_i\leq 4$. Observe that for $k\ge 1$, $M_{k,p}$ contains a non-separating sphere.
We claim that for each $W_i$, $k_i \ge 1$. To verify this we consider the four possible cases with $k_i = 0$. 

\begin{enumerate}
    \item[$p_i=1$]
    In this case, $W_i$ is a ball bounded by exactly one of $a$ or $b$. 
    Thus either $a$ or $b$ is non-essential in $M_r$, contradicting our hypotheses.

    \item[$p_i=2$]
    In this case, $\partial W_i = a\cup b$, and $W_i = M_{0,2}$,
    which is homeomorphic to $S^2\times I$. 
    This implies that $a$ and $b$ are isotopic, contradicting our hypotheses. 

    \item[$p_i=3$] 
    In this case, $\partial W_i$ is three spheres.
    Without loss of generality, suppose two boundary components are identified with $b$ and one with a sphere $a$.
    We claim there is some curve $\gamma$ in $M_r$ that passes through $b$ exactly once and has $a\cap \gamma=\emptyset$.
    Given a neighborhood $N$ of $b$, select a pair of points $p_1,p_2$ in the two components of $N\setminus b$. 
    Since $N$ is path-connected and $b$ separates $N$, there exists some path $P_N$ between $p_1$ and $p_2$ that intersects $b$ exactly once.
    Since $W_i$ is path-connected and $N\setminus b\subset W_i$, there exists a path $P_c \subset W_i$ connecting $p_1,p_2$.
    By construction the union $\gamma=P_N\cup P_c$ is the desired curve. By Poincar\'{e} duality we conclude $a$ and $b$ are not homologous in $M_r$, contradicting our hypotheses. 
                
    \item[$p_i=4$] 
    In this case, there is only one component $W_1 = M_{0,4}$.
    This implies $r=2$. However, we are only considering $r\geq 3$.
    
\end{enumerate}

Thus, each $W_i$ contains some non-separating sphere $\sigma_i$.
Construct a double cover $\tilde{M}$ by cutting open two copies of $M_r$ along the collection of $\sigma_i$ and gluing crosswise.
Notice that if $a\cup b$ does not separate $M_r$, then we can show that $[a]\neq[b]$ via a similar curve construction.
Therefore $a\cup b$ separates $M_r$. 

We will construct a closed loop $\gamma$ in $\tilde{M}$ that intersects both lifts of $a$, $\tilde{a}$ and $\tilde{a}'$ exactly once and disjoint from both lifts of $b$.

As noted in the case $p_i = 3$, if $a$ appears twice in the boundary of one complementary component, $[a]\neq [b]$. So, without loss of generality suppose $a$ is in the boundary of $W_1$ and $W_2$. Since each $W_i$ is path connected and the spheres $\sigma_i$ are non-separating, there is a loop $\gamma_0$ that intersects $\sigma_1$ and $\sigma_2$ exactly once, intersecting $a$ twice. By construction, there is a loop $\gamma$ in $\tilde{M}$ covering $\gamma_0$, disjoint from both lifts of $b$ which intersects $\tilde{a}$ and $\tilde{a}'$ exactly once (see \cref{double-cover-gamma}).
\end{proof}

\begin{figure}
\centering\labellist
\small\hair 2pt
 \pinlabel {{\color{figred}$a$}} [ ] at 107 109
 \pinlabel {{\color{figblue}$b$}} [ ] at 107 36
 \pinlabel {{\color{figyellow}$\sigma_1$}} [ ] at 40 36
 \pinlabel {{\color{figyellow}$\sigma_2$}} [ ] at 175 36
 \pinlabel {{\color{figmagenta}$\gamma_0$}} [ ] at 40 85
 \pinlabel {{\color{figyellow}$\sigma_1$}} [ ] at 257 36
 \pinlabel {{\color{figred}$a$}} [ ] at 293 36
 \pinlabel {{\color{figyellow}$\sigma_2$}} [ ] at 328 36
 \pinlabel {{\color{figblue}$b$}} [ ] at 362 36
 \pinlabel {{\color{figmagenta}$\gamma_0$}} [ ] at 257 85
\endlabellist
\centering
\includegraphics[width=\textwidth]{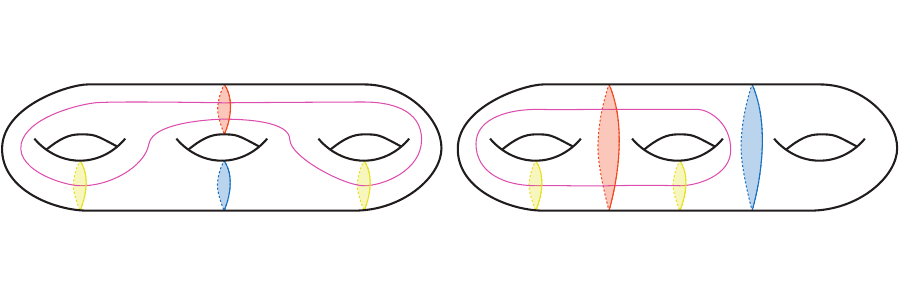}
\caption{Two possible cases for the construction of $\gamma_0$ in $M_3$, here depicted as one-half of a Heegaard splitting. The spheres $a, b,\sigma_1,\sigma_2$ are the doubles of the indicated disks.}
\label{double-cover-gamma}
\end{figure}

\begin{bibdiv}
\begin{biblist}
\bibselect{bibliography}
\end{biblist}
\end{bibdiv}

\end{document}